\definecolor{ForestGreen}{rgb}{0.1,0.8,0}
\definecolor{EgyptBlue}{rgb}{0.063,0.1,0.6}
\newcommand\blfootnote[1]{%
  \begingroup
  \renewcommand\thefootnote{}\footnote{#1}%
  \addtocounter{footnote}{-1}%
  \endgroup
}
\newtheorem{theorem}{Theorem}
\newtheorem{proposition}[theorem]{Proposition}
\newtheorem{lemma}[theorem]{Lemma}
\theoremstyle{definition}
\newtheorem{definition}[theorem]{Definition}
\newtheorem{remark}[theorem]{Remark}
\let\OLDthebibliography\thebibliography
\renewcommand\thebibliography[1]{
	\OLDthebibliography{#1}
	\setlength{\parskip}{1pt}
	\setlength{\itemsep}{1pt plus 0.3ex}
}
\numberwithin{theorem}{section}
\numberwithin{theorem}{section}
\DeclarePairedDelimiter\norm{\lVert}{\rVert}%
\let\oldnorm\norm
\def\norm{\@ifstar{\oldnorm}{\oldnorm*}}
\newcommand{\pa} {\partial}
\newcommand{\be} {\beta}
\newcommand{\de} {\delta}
\newcommand{\De} {\Delta}
\newcommand{\ga} {\gamma}
\newcommand{\Ga} {\Gamma}
\newcommand{\om} {\omega}
\newcommand{\Om} {\Omega}
\newcommand{\ka} {\kappa}
\newcommand\restr[2]{{
  \left.\kern-\nulldelimiterspace 
  #1 
  \right|_{#2} 
  }}
\def\vol{\mathrm{Vol}}
\def\B{{\mathcal B}}
\def\L{{\widetilde L}}
\def\M{{\widetilde M}}
\def\t{\tau}
\def\C{{\mathcal C}}
\def\R{{\mathbb R}}
\def\Hn{{\mathbb H}^n}
\def\H{{\mathbb H}}
\def\({{\Big(}}
\def\){{\Big)}}
\def\c1{{\C_c^1}}
\def\d{{\rm d}}
\def\dr{{\rm d}r}
\def\dS{{\rm d}S}
\def\dt{{\rm d}t}
\def\G{{\widetilde{G}}}
\date{}
\begin{document}
\title[Reverse Faber-Krahn inequality in the Hyperbolic space]{ Reverse Faber-Krahn inequality for the $p$-Laplacian in Hyperbolic space}

\author{Mrityunjoy Ghosh$^{1,*}$}
\blfootnote{$^{*}$Corresponding author.}
\address{$^1$Department of Mathematics, Indian Institute of Technology Madras, Chennai 600036, India}

\author{Sheela Verma$^2$}
\address{$^2$Department of Mathematical Sciences \\
Indian Institute of Technology(BHU), Varanasi, India}
\email{ghoshmrityunjoy22@gmail.com, sheela.mat@iitbhu.ac.in}

\subjclass[2020]{58C40, 35P15, 35P30, 49R05}

\keywords{$p$-Laplacian, $h$-convexity, Quermassintegrals, Steiner formula, Nagy's inequality, Reverse Faber-Krahn inequality, Interior parallels.}

\maketitle
\begin{abstract}
In this paper, we study the shape optimization problem for the first eigenvalue of the $p$-Laplace operator with the mixed Neumann-Dirichlet boundary conditions on multiply-connected domains in hyperbolic space. Precisely, we establish that among all multiply-connected domains of a given volume and prescribed $(n-1)$-th quermassintegral of the convex Dirichlet boundary (inner boundary), the concentric annular region produces the largest first eigenvalue. We also derive  Nagy's type inequality for outer parallel sets of a convex domain in the hyperbolic space. 
\end{abstract}

\section{Introduction}\label{intro}

The study of isoperimetric type inequalities for the eigenvalues of elliptic operators remains one of the most attracted areas in spectral theory after a famous conjecture by Lord Rayleigh stating that:
\emph{among all domains of the given volume, the ball minimizes the first eigenvalue $\lambda_{1}$ of the Dirichlet Laplacian}, i.e.,
\begin{align} \label{RFK inequality}
 \lambda_{1}(\Omega) \geq \lambda_{1} (B),  
\end{align}
for all domains $\Omega$ such that Vol$(\Omega)$= Vol$(B)$. Here $B$ represents the ball. This conjecture was proved by Faber \cite{Faber} for planar Euclidean domains, and later Krahn \cite{Krahn1926} generalized it to higher dimensions. Inequality \eqref{RFK inequality} is known as the \textit{Rayleigh-Faber-Krahn inequality}. Similar results also hold for domains in Riemannian manifolds; see \cite{Chavel,Benguria} for instance. We refer to the monographs \cite{Henrot, Henrot2021} for various such isoperimetric type problems.

 In this article, we focus on the first eigenvalue of the $p$-Laplace operator with the mixed Neumann-Dirichlet boundary conditions on domains in the hyperbolic space. Let $\Hn$ denote the $n$-dimensional hyperbolic space with constant sectional curvature $-1$. Let $\Om\subset \Hn$ be a bounded domain with $\pa \Omega=\Ga_D\sqcup \Ga_N$.  For $1<p<\infty,$ the $p$-Laplace operator is defined as $\De_p u=\text{div}(|\nabla u|^{p-2} \nabla u)$. Here $\nabla$ denotes the hyperbolic gradient. For $p=2,$ the $p$-Laplace operator coincides with the classical \textit{Laplace-Beltrami} operator. We consider the following eigenvalue problem of the $p$-Laplace operator:
\begin{equation}\tag{$\mathscr{P}$}\label{Problem}
	\left. \begin{aligned}
	-\Delta_p u &=\tau |u|^{p-2}u \quad\text{in} \quad\Omega,\\
	u &=0 \qquad\quad \quad\;\text{on}\quad \Ga_D,\\
\frac{\partial u}{\partial \eta} &=0 \qquad\quad \quad\;\text{on} \quad \Ga_N,
	\end{aligned}\right\}
\end{equation}
where $\tau\in \R$ and $\eta$ is the outward unit normal vector to $\Ga_N$. A real number $\tau$ is said to be an eigenvalue of \eqref{Problem} if there exists $\phi\in W^{1,p}_{\Ga_D}(\Omega)\setminus\{0\}$ satisfying the following
\begin{equation*}
	\int_{\Om}|\nabla \phi|^{p-2}\left<\nabla \phi,\nabla w \right> \d V_g=\tau\int_{\Om}|\phi|^{p-2}\phi w\; \d V_g, \quad\forall\; w\in W^{1,p}_{\Ga_D}(\Omega),
\end{equation*}
where $\d V_g$ is the volume element induced by the hyperbolic metric $g$ and $W^{1,p}_{\Ga_D}(\Omega)$ is the  space of all Sobolev functions that vanishes on $\Ga_D$, i.e., 
\begin{equation*}
    W^{1,p}_{\Ga_D}(\Omega)=\{u\in W^{1,p}(\Om):u|_{\Ga_D}=0 \}.
\end{equation*}
It is well known that \eqref{Problem} admits a least positive eigenvalue $\t_1(\Om)$ (cf. \cite{Azorero1987}) whose variational characterization is given by 
\begin{equation}\label{Variational}
    \t_1(\Om)=\inf_{u\in W^{1,p}_{\Ga_D}(\Omega)\setminus\{0\}}\left\{\frac{\int_{\Omega}|\nabla u|^p\d V_g}{\int_{\Omega}|u|^p\d V_g}\right\}
\end{equation}
and $\tau_1(\Omega)$ is simple. 

Let $W_{n-1}(C)$ denotes the $(n-1)$-th quermassintegral (see Section \ref{Quermassin} for precise definition) of a convex domain $C$. In this article, we choose the following types of domains:
\begin{equation}\tag{$\mathscr{D}$}\label{Domain}
	\left. \begin{aligned}
	\Om =\Om_N\setminus \overline{\Om_D},\;\text{where}\; \Om_D, \Om_N \;\text{are two smooth, bounded domains in}\\
	 \H^n \;
	\text{ such that}\;\Om_D\;\text{is simply connected and} \;\overline{\Om_D}\subset \Om_N. \\
	A_\Om = B_R\setminus \overline{B_r},\;\text{where}\; B_R, B_r \;\text{are two concentric open geodesic balls}\\
	\text{ of radius}\;R, r\;(0<r<R)\;\text{respectively in}\;\Hn \;\text{such that}\\
	|\Om|=|A_\Om|\;\text{and}\;W_{n-1}(B_r)=W_{n-1}(\Om_D).
\end{aligned}\right\}
\end{equation}
 Assume that $\Ga_D := \partial \Om_D$ and $\Ga_N := \partial \Om_N$. Here $\Ga_D$ and $\Ga_N$, respectively, represent the Dirichlet and Neumann boundary, i.e., we consider the inner Dirichlet-outer Neumann boundary condition for \eqref{Problem}.

Now we state some existing isoperimetric bounds of $\t_1(\Om)$ for domains in the Euclidean space. Suppose $\Om$ and $A_\Om$ are domains in $\R^n$ as defined in \eqref{Domain}. For $\Om \subset \R^2$, Hersch \cite{Hersch} studied problem \eqref{Problem} for the classical Laplace operator and proved that $A_\Om$ maximizes the first eigenvalue of \eqref{Problem}, i.e., 
$$\t_1(\Om)\leq \t_1(A_\Om).$$
The above inequality is known as the \textit{reverse Faber-Krahn} inequality for the mixed eigenvalue problem. Note that in the planar case, the quermassintegral constraint, imposed on the Dirichlet boundary, reduces to the perimeter constraint (see Section \ref{Quermassin}). In \cite[Theorem 1.2]{AnoopAshok}, Anoop and Ashok extended Hersch's result for the $p$-Laplacian and to the higher dimensions under the assumptions that $\Om_D$ is a ball. Later, in \cite[Theorem 1.1]{Dellapietra}, the authors extended this result to the case when $\Om_D$ is convex. The proof given by Hersch \cite{Hersch} is based on the \textit{``method of interior parallels''} for planar domains. Hersch's idea was to construct a test function whose level sets are the parallel sets to the Dirichlet boundary. The key step for applying this method is the Nagy's inequality \cite{Nagy} for outer parallel sets of a planar domain, which is as follows: \\
\emph{Let $K\subset \R^2$ be a bounded, simply connected domain and $\de>0$. Let $K_\de$ denotes the set of all points in $\R^2$ that are at a distance (Euclidean) at most $\de$ from $K$. Suppose $K^\#$ is an open ball in $\R^2$ of same perimeter as $K$, i.e., $P(K)=P(K^\#).$ Then Sz. Nagy \cite{Nagy} proved that}
\begin{equation}\label{Nagy_Rn}
    P(K_\de)\leq P(K^\#_\de).
\end{equation}
In \cite{AnoopAshok}, the authors derive an analogoue of the above inequality for multiply connected domains in higher dimensions under the assumption that $\Om_D$ is a ball. However, a rigorous version of Nagy's type inequality \eqref{Nagy_Rn} for convex domains in $\R^n$ $(n\geq 3)$ has been proved in \cite[Corollary 3.4]{Joy}. It is worth mentioning that Nagy's type inequality has its own importance as it can be applied to obtain several bounds for the first eigenvalue of Laplacian and torsional rigidity; see \cite{Makai1,Polya}, for instance. To the best of our knowledge, the analogue of Hersch's result and Nagy's type inequality for the outer parallel sets are not available in hyperbolic space.

The main objective of this article is to prove Hersch's result in the hyperbolic space $\Hn$. Moreover, we establish a hyperbolic version of Nagy's inequality \eqref{Nagy_Rn}. To state the main results, we need the following definitions.

\begin{definition}[Outer parallel set] \label{Parallel}
Let $K\subset \Hn$ and $\de>0$. Then the \textit{Outer parallel body} of $K$  at a distance $\de>0$ is defined as 
$$K_\de=\{x\in \Hn: d_{\H} (x,K)\leq \de\},$$
where $d_\H$ is the hyperbolic distance function. The boundary $\pa K_\de$ is called as the \textit{Outer parallel set} of $K$ at a distance $\de.$
\end{definition}

Next, we recall the definition of \textit{h-convex} (or \textit{horoconvex}) domains in the hyperbolic space; cf. \cite[Section 2]{Solanes2004}. For more details, see Section \ref{Sec:Horoconvexity}.
\begin{definition}[$h$-convex domain]\label{Hconvex}
A domain $K\subset \Hn$ is said to be \textit{h-convex} if all the principal curvatures of $\pa K$ are bounded below by 1, i.e., if $\ka_i,\;1\leq i\leq  n-1,$ are the principal curvatures of $\pa K$, then $\ka_i\geq 1,\;\forall \;1\leq i\leq  n-1$.
\end{definition}

Let $P(A):=|\pa A|$ denotes the perimeter of a set $A\subset \Hn.$ Now we state the first main result of this article which is an analogue of Sz. Nagy's inequality for outer parallel sets of a domain in the hyperbolic space. 
\begin{theorem}[Nagy's inequality]\label{Nagy_theorem}
Let $K\subset \Hn$ be a smooth, bounded, convex domain and $\de>0$. Let $K^*$ be an open geodesic ball in $\Hn$ such that $W_{n-1}(K)=W_{n-1}(K^*)$. Then the followings hold:
\begin{enumerate}[(i)]
    \item If $n=2$, then $P(K_\de)\leq P(K^*_\de).$ Further, equality holds if and only if $K$ is a geodesic ball.
    \item  If $n\geq 3$ and $K$ is h-convex, then $P(K_\de)\leq P(K^*_\de).$ Further, equality holds if and only if $K$ is a geodesic ball.
\end{enumerate}
\end{theorem}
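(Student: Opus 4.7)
The plan is to combine a hyperbolic Steiner-type formula for the perimeter of outer parallel bodies with Alexandrov--Fenchel-type inequalities for quermassintegrals in $\Hn$.

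First I would express $P(K_\de)$ as a nonnegative linear combination of the quermassintegrals of $K$. In $\Hn$ the principal curvatures of the parallel hypersurface $\pa K_t$ evolve along the normal flow according to explicit ODEs whose solutions involve $\sinh t$ and $\cosh t$; integrating the induced volume form and curvature invariants then yields an identity of the shape
\begin{equation*}
P(K_\de) \;=\; \sum_{k=0}^{n-1} c_{n,k}\,\psi_k(\de)\, W_k(K),
\end{equation*}
where $c_{n,k}>0$ are combinatorial constants and each $\psi_k(\de)$ is a positive smooth function of $\de$. Applied to $K^*$, the same identity gives $P(K^*_\de)$; the radius of $K^*$ is pinned down by the assumption $W_{n-1}(K)=W_{n-1}(K^*)$.

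Next I would invoke the hyperbolic quermassintegral inequalities: for an $h$-convex domain $K\subset\Hn$ with $n\geq 3$,
\begin{equation*}
W_k(K)\leq W_k(K^*) \quad\text{for every } 0\leq k\leq n-2,
\end{equation*}
with equality iff $K$ is a geodesic ball. Such inequalities are by now classical, appearing in the works of Wang--Xia, Andrews--Hu--Li, and Hu--Li--Wei, among others. Since the $k=n-1$ term contributes identically on both sides and the remaining weights are strictly positive, substituting these bounds into the Steiner-type formula immediately yields $P(K_\de)\leq P(K^*_\de)$, with equality forcing equality at every $k$ and hence forcing $K$ to be a geodesic ball. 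In the planar case $n=2$, the only nontrivial quermassintegral besides $W_1=\tfrac12 P$ is the area $W_0=|K|$, so the chain collapses to the hyperbolic isoperimetric inequality for planar domains, which holds for any convex set; this is why no $h$-convexity hypothesis is needed in part (i).

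The main obstacle is ensuring positivity of the Steiner coefficients $\psi_k(\de)$ throughout the whole range $0\leq k\leq n-2$: a single sign change would reverse the comparison and destroy the argument. Verifying positivity uses the $h$-convexity assumption, which guarantees that every evolved principal curvature $\ka_i(t)$ stays $\geq 1$ along the parallel flow, so that the elementary symmetric polynomials of these curvatures remain positive. Once sign-correctness is in hand, the equality characterization follows term-by-term from the rigidity in the quermassintegral inequalities.
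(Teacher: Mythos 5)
Your proposal follows essentially the same route as the paper: express $P(K_\de)$ via the hyperbolic Steiner formula as a positive combination of the quermassintegrals $W_k(K)$, then substitute the comparison $W_k(K)\leq W_k(K^*)$ coming from the Wang--Xia Alexandrov--Fenchel inequalities for $h$-convex domains (respectively, the hyperbolic isoperimetric inequality when $n=2$), with rigidity giving the equality case. One small correction: the positivity of the Steiner coefficients (they are nonnegative combinations of $\cosh^j(\de)\sinh^{n-1-j}(\de)$) already holds for merely convex $K$, so $h$-convexity is needed only to invoke the quermassintegral inequalities, not for sign-correctness of the Steiner expansion.
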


The main ingredients to prove Theorem \ref{Nagy_theorem} are $(i)$ the Steiner formula for convex domains in $\Hn$, and $(ii)$ classical hyperbolic isoperimetric inequality (for $n=2$) and a version of Alexandrov-Fenchel inequality involving the quermassintegrals due to Wang and Xia \cite{Wang2014} (for $n\geq 3$). First, we express the perimeter of outer parallel sets of a convex domain in terms of a polynomial in $\de$ using the Steiner formula. Then we derive an  isoperimetric type inequality between $W_i(K)$ and $W_i(K^*)$, which gives the desired result upon substituting in the Steiner formula. At this point, it is necessary to mention that for $n=2$, we are able to get Nagy's type estimate for the convex domains, thanks to the classical hyperbolic isoperimetric  inequality that holds for any domain. However, for $n\geq3$, we need a stronger assumption than convexity, called $h$-convexity. This assumption is necessary to apply a class of Alexandrov-Fenchel inequalities (Proposition \ref{Alexandrov}) which are not available for convex domains in the hyperbolic space. The extension of Theorem \ref{Nagy_theorem} for general domains in the hyperbolic space seems a challenging open problem.

Then by applying the Nagy's inequality (Theorem \ref{Nagy_theorem}), we prove the reverse Faber-Krahn inequality for domains in the hyperbolic space. More precisely, we obtain the following result.

\begin{theorem}[Reverse Faber-Krahn inequality]\label{RFK_theorem}
Let $\Om, A_\Om$ be as defined in \eqref{Domain} and $\t_1$ be the first eigenvalue of \eqref{Problem}. Assume that $\Om_D$ is convex for $n=2$ and $\Om_D$ is h-convex for $n\geq 3.$ Then
 $$\t_1(\Om)\leq \t_1(A_\Om).$$ 
 Moreover, equality occurs only when $\Om=A_\Om.$
\end{theorem}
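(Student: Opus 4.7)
\emph{Strategy.} I would carry out Hersch's method of interior parallels in the hyperbolic setting, with Nagy's inequality (Theorem \ref{Nagy_theorem}) replacing its Euclidean counterpart. The plan is to use the radial first eigenfunction on the annulus $A_\Om$ to produce a test function on $\Om$ whose level sets are the outer parallel sets of $\Gamma_D$, and then to estimate its Rayleigh quotient against $\tau_1(A_\Om)$.

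\emph{Construction of the test function.} Since $A_\Om=B_R\setminus\overline{B_r}$ is radially symmetric and $\tau_1(A_\Om)$ is simple, its first positive eigenfunction $v$ must be radial: $v(x)=\tilde v(\rho(x))$, where $\rho(x)$ is the hyperbolic distance from the common center and $\tilde v\colon[r,R]\to[0,\infty)$ is smooth and increasing with $\tilde v(r)=0$, $\tilde v'(R)=0$. Define $\phi\colon[0,\infty)\to[0,\infty)$ by $\phi(t)=\tilde v(r+t)$ for $t\in[0,R-r]$ and $\phi(t)=\tilde v(R)$ for $t>R-r$, and set
\[ u(x)=\phi\bigl(d_\H(x,\overline{\Om_D})\bigr),\qquad x\in\Om. \]
Since $\phi(0)=0$, the function $u$ vanishes on $\Gamma_D$, so $u\in W^{1,p}_{\Gamma_D}(\Om)$ and is admissible in the variational characterization \eqref{Variational}.

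\emph{The two comparisons.} Using the eikonal identity $|\nabla d_\H(\cdot,\Om_D)|=1$ a.e.\ and the coarea formula,
\[ \int_\Om|\nabla u|^p\,\d V_g=\int_0^{R-r}|\tilde v'(r+t)|^p\,\mu(t)\,\d t, \]
where $\mu(t)=\mathcal{H}^{n-1}\bigl(\{x\in\Om:d_\H(x,\Om_D)=t\}\bigr)$. This level hypersurface is contained in $\partial(\Om_D)_t$, so $\mu(t)\leq P(\partial(\Om_D)_t)$; invoking \eqref{Domain} and Theorem \ref{Nagy_theorem} (applied with $K=\Om_D$ and $K^*=B_r$, using the convexity/$h$-convexity hypothesis) gives $P(\partial(\Om_D)_t)\leq P(\partial B_{r+t})$. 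Substituting back recovers the Dirichlet energy of $v$ on $A_\Om$, producing the numerator bound $\int_\Om|\nabla u|^p\,\d V_g\leq \int_{A_\Om}|\nabla v|^p\,\d V_g$. For the denominator, integrating the bound $\mu(s)\leq P(\partial B_{r+s})$ from $0$ to $t$ yields $|\{d_\H(\cdot,\Om_D)\leq t\}\cap\Om|\leq|B_{r+t}\setminus B_r|$; coupled with the equi-volume constraint $|\Om|=|A_\Om|$, this delivers the superlevel-set comparison
\[ |\{u>c\}\cap\Om|\geq|\{v>c\}\cap A_\Om|\qquad\text{for every }c\in(0,\tilde v(R)), \]
and the layer-cake representation of $L^p$ norms then gives $\int_\Om u^p\,\d V_g\geq\int_{A_\Om}v^p\,\d V_g$. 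Taking the quotient and using \eqref{Variational} delivers $\tau_1(\Om)\leq\tau_1(A_\Om)$.

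\emph{Equality case and anticipated obstacle.} If equality occurs, then Nagy's inequality must be saturated for a.e.\ $t\in[0,R-r]$, and by the rigidity clause of Theorem \ref{Nagy_theorem} this forces $\Om_D$ to be a geodesic ball; the accompanying equality in the volumes of superlevel sets then forces $\Om_N$ to be the concentric enclosing ball, so $\Om=A_\Om$. The step I expect to require the most care is the measure-theoretic one: justifying $\mu(t)\leq P(\partial(\Om_D)_t)$ for a.e.\ $t$ (the parallel hypersurface may exit $\Om_N$, in which case only the portion inside $\Om$ contributes to $\mu$), and validating the coarea and layer-cake manipulations given only Lipschitz regularity of $d_\H(\cdot,\Om_D)$. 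The $h$-convexity assumption for $n\geq 3$ enters only through Theorem \ref{Nagy_theorem} and cannot be relaxed within this scheme.
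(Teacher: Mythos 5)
Your proposal is correct, and while both arguments are implementations of Hersch's method of interior parallels driven by Theorem \ref{Nagy_theorem}, your key comparison step is genuinely different from the paper's. The paper does not transplant the radial profile directly: it reparametrizes by the weighted arc-lengths $M(\de)=\int_0^\de L(s)^{1-p'}{\rm d}s$ and $\M(\de)=\int_0^\de \L(s)^{1-p'}{\rm d}s$ of \eqref{Parame}, sets $f=v\circ\M^{-1}$ and $u=f(M(d_\H(\cdot,\Ga_D)))$ (truncated at the value $f(\M_*)$). This choice makes the Dirichlet energies \emph{exactly equal}, $\int_\Om|\nabla u|^p=\int_0^{\M_*}|f'|^p=\int_{A_\Om}|\nabla v|^p$, and pushes all the work into the $L^p$-norm comparison, carried out via the inequality $G\le\G$ of Lemma \ref{Auxiliary_lemma}(ii), the bound $f(\be)\le f(\M_*)$, and the volume constraint. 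You instead take $u=\tilde v(r+d_\H(\cdot,\Om_D))$ and split the estimate the other way: the numerator decreases because $L(t)\le\L(t)$ (Nagy), and the denominator increases via the superlevel-set comparison $|\{u>c\}\cap\Om|\ge|\{v>c\}\cap A_\Om|$ (from $\int_0^t L\le\int_0^t\L$ plus $|\Om|=|A_\Om|$) together with the layer-cake formula. Your version is more elementary --- it bypasses the $M,\M$ machinery and Lemma \ref{Auxiliary_lemma}(ii) entirely, needing only $L\le\L$ and the equi-volume hypothesis --- at the cost of having to estimate both the numerator and the denominator rather than only the latter. Two points to tighten: (a) for the equality case, rather than arguing that Nagy's inequality is saturated in the numerator (which requires knowing $\tilde v'>0$ on $(r,R)$, true but needing the radial ODE), it is cleaner to observe that equality in the denominator comparison already forces $\int_0^t L=\int_0^t\L$ for all $t$, hence $L\equiv\L$ by continuity, after which the rigidity clause of Theorem \ref{Nagy_theorem} and the volume constraint give $\Om=A_\Om$ exactly as in the proof of Lemma \ref{Auxiliary_lemma}(i); (b) the fact $R-r\le\de_0$, which you implicitly need so that the profile of $v$ is not cut off before it is exhausted, follows from the same two ingredients and is precisely Lemma \ref{Auxiliary_lemma}(i). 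The measure-theoretic caveats you flag (coarea for the Lipschitz distance function, and $\mu(t)=|\pa\Om_{D_t}\cap\Om|\le P(\Om_{D_t})$) appear in identical form in the paper's proof and are treated there in the same way.
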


To prove Theorem \ref{RFK_theorem}, we apply the method of interior parallels in the hyperbolic space with the help of Nagy's type inequality (Theorem \ref{Nagy_theorem}) for outer parallel sets. Namely, we produce a test function on $\Om$ using the first eigenfunction of $A_\Om$ that remains constant on the outer parallel sets to $\Om_D$. Indeed, the construction of the test function on $\Om$ is done in such a way that its gradient norm coincides with the first eigenfunction of $A_\Om$, whereas its $p$-norm increases. We would like to mention that the analogue of Theorem \ref{RFK_theorem} for the case when $\Om_D$ is a non-convex domain remains completely open (see Section \ref{open_problem}). 

The rest of this article is organized as follows. In Section \ref{preli}, we discuss a few geometric tools related  to the convex domains in the hyperbolic space and mention some facts about the $h$-convexity. The proofs of Theorem \ref{Nagy_theorem} and Theorem \ref{RFK_theorem} are given in Section \ref{main_results}. Finally, in Section \ref{open_problem}, we mention some open problems related to Nagy's type inequality and reverse Faber-Krahn inequality.

\section{Preliminaries}\label{preli}
In this section, we first discuss the notion of quermassintegrals (or mixed volumes) for a convex domain in $\Hn$. Then we state a few well known facts about the $h$-convex domains. We complete this section by providing some isoperimetric inequalities in the hyperbolic space, which will be used in later sections. Throughout the article, we denote the boundary of a set $A\subset \Hn$ by $\pa A.$ Also, $P(A)$ stands for the perimeter of $ A$, i.e., $P(A)=|\pa A|.$

\subsection{Quermassintegrals \& Curvature integrals}\label{Quermassin} Let $K\subset \Hn$ be a bounded, convex domain. Then the $Quermassintegrals$ $W_j(K),\;\text{for}\;1\leq j\leq n-1,$ is defined (cf. \cite{Wang2014,Santalo}) as
\begin{equation}\label{Quermass}
    W_j(K)=\frac{(n-j)\om_{j-1}\cdots\om_0}{n\om_{n-2}\cdots\om_{n-j-1}}\int_{\mathcal{L}_j}\chi(L_j\cap K)\d L_j,
\end{equation}
where $L_j$ is a $j$-dimensional totally geodesic subspace, $\mathcal{L}_j$ is the space of all totally geodesic subspaces of dimension $j$, $\d L_j$ is the natural measure on $\mathcal{L}_j$, $\om_i$ denotes the $i$-dimensional Hausdorff measure of the $i$-dimensional unit sphere and $\chi$ is the characteristic function acting as $\chi(A)=1,$ if $A\neq \emptyset$ and $\chi(A)=0,$ if $A= \emptyset$. As a convention, we assume $W_0(K)=\vol(K)$ and $W_n(K)=\frac{\om_{n-1}}{n}.$ Also we observe that $W_1(K)=\frac{P(K)}{n}$; cf. \cite{Santalo}.

Let $\ka_1,\ka_2,\dots,\ka_{n-1}$ are the principal curvatures of $\pa K$ and $H_j$, for $0\leq j\leq n-1$, denote the normalized elementary symmetric functions of principal curvatures of $\pa K$. Then the \textit{Curvature integrals} are defined by
\begin{equation}\label{Curvature_int}
    V_{n-j-1}(K)=\int_{\pa K} H_j\dS, \;\text{for}\;j=0,1,\dots,n-1,
\end{equation}
  where $\dS$ is the volume element on $\pa K$ induced from $\Hn$. Now by \cite[Proposition 7]{Solanes2005}, curvature integrals and quermassintegrals are related by the following formula:
\begin{equation}\label{Curv_Quer}
    V_{n-j-1}(K)=n\left(W_{j+1}(K)+\frac{j}{n-j+1}W_{j-1}(K)\right), \;\text{for }\;0\leq j\leq n-1.
\end{equation}
  
 \subsection{Horoconvexity ($h$-convexity) in the hyperbolic space} \label{Sec:Horoconvexity} We first define $h$-convexity in the hyperbolic plane via $\lambda$-geodesics and state some of its properties. Then we give the definition of $h$-convexity in higher dimensions. For more details, see \cite{gallego1999asymptotic}.

\begin{definition} [Equidistants or $\lambda$- geodesics]
The curves which are equidistant to geodesics are called \emph{Equidistants}. A $\lambda$-geodesic is an equidistant that meets the infinity line with angle $\alpha$ such that $| \cos \alpha | = \lambda$.
\end{definition}

\begin{remark}
For $\lambda = 0  (\alpha = 90^{\circ})$, equidistants are geodesics and for $\lambda = 1$, they are horocycles. The geodesic curvature of a $\lambda$ geodesic is $\pm \lambda$. Some equidistants in the hyperbolic plane have been drawn in Figure \ref{fig:Equidistants}.
\end{remark}

\begin{figure}[htp]
    \centering
    \includegraphics[width=10cm]{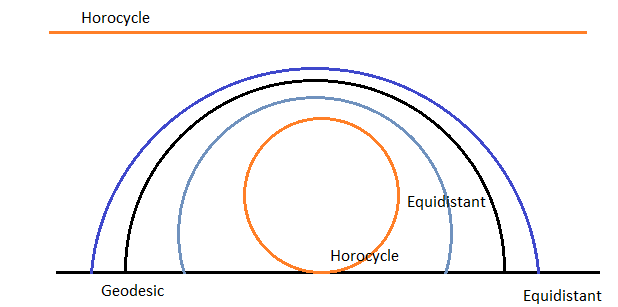}
    \caption{Equidistants in the hyperbolic plane}
    \label{fig:Equidistants}
\end{figure}
The following lemma shows the relation between positions of different $\lambda$ geodesics.
\begin{lemma}
Given any two points $p$ and $q$ in the hyperbolic plane and $0 < \lambda \leq 1$, there are exactly two $\lambda$-geodesics passing through them. These $\lambda$-geodesics are symmetric with respect to the geodesic passing through $p$ and $q$ and lie in the region bounded by the two horocycles passing through these points. 
\end{lemma}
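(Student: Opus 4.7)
The plan is to reduce to a symmetric configuration via isometries, classify the admissible base geodesics (or ideal centers) using the reflection that swaps $p$ and $q$, and close with an explicit hyperbolic Pythagoras computation. First I would apply an isometry of $\mathbb{H}^2$ to place $p$ and $q$ symmetrically on a chosen geodesic $\gamma_0$ about their midpoint $m$, so $|pm|=|qm|=s$. Let $\mu$ be the perpendicular bisector of $pq$ at $m$; the reflection $\sigma_\mu$ swaps $p$ and $q$, while the reflection $\sigma_{\gamma_0}$ fixes $\{p,q\}$ setwise, and these two symmetries will organize the whole argument.

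Next, I would use the fact that for $\lambda\in(0,1)$ every $\lambda$-geodesic is one of the two connected components of the $d_0$-equidistant of a unique base geodesic $\beta$, with $d_0=\operatorname{arctanh}\lambda$, and that for $\lambda=1$ it is a horocycle with unique ideal center. If such a curve $C$ passes through both $p$ and $q$, then $\sigma_\mu(C)$ is also a $\lambda$-geodesic through $p,q$ of the same type; by uniqueness of the base geodesic (resp.\ center), this forces $\sigma_\mu(\beta)=\beta$ (resp.\ the center is fixed by $\sigma_\mu$). For $\lambda\in(0,1)$ we conclude $\beta=\mu$ or $\beta\perp\mu$; the first case is ruled out because $p$ and $q$ then lie on opposite sides of $\beta$ while $C$ lies on one side, so $\beta$ meets $\mu$ perpendicularly at some point $m'$. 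For $\lambda=1$, the two ideal fixed points of $\sigma_\mu$ are exactly the endpoints of $\mu$ at infinity, delivering the two horocycles of that case directly.

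Parametrize the candidate $\beta$ (for $\lambda<1$) by the signed hyperbolic distance $t$ from $m$ to $m'$ along $\mu$, and let $p'$ be the foot of the perpendicular from $p$ onto $\beta$. Applying hyperbolic Pythagoras to the right triangle $\triangle pmm'$ (right angle at $m$) and to $\triangle pp'm'$ (right angle at $p'$), together with the angle identity $\angle pm'm+\angle pm'p'=\pi/2$ forced by $\beta\perp\mu$, a short manipulation starting from $\cosh|pm'|=\cosh s\cosh t$ yields
\begin{equation*}
\sinh d_0=\cosh s\cdot\sinh|t|.
\end{equation*}
For every $d_0\in(0,\infty)$ this determines $|t|$ uniquely, producing one base geodesic $\beta$ on each side of $\gamma_0$, hence exactly one $\lambda$-geodesic through $p,q$ per side. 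The two are interchanged by $\sigma_{\gamma_0}$, giving both the count of two and the symmetry claim.

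Finally, to confine the $\lambda$-geodesics to the lens bounded by the two horocycles through $p,q$, I would argue monotonicity on a fixed side of $\gamma_0$: the displayed relation shows $|t|$ is strictly increasing in $d_0$, and two distinct constant-curvature curves sharing $\{p,q\}$ cannot meet off these two points (otherwise the enclosed curvilinear bigon would violate the Gauss--Bonnet formula in constant curvature $-1$). Hence the family of $\lambda$-geodesics on this side is nested and interpolates from $\gamma_0$ at $\lambda=0^+$ to the horocycle at $\lambda=1$, placing every intermediate curve strictly between them. The main obstacle is making the non-crossing step airtight; besides Gauss--Bonnet, an alternative route is a direct verification in the Poincar\'e disk, where horocycles and equidistants appear as circular arcs and the containment reduces to elementary Euclidean geometry.
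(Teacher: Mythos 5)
The paper itself offers no proof of this lemma --- it is quoted as a known fact about $\lambda$-geodesics with a pointer to the literature --- so I can only assess your argument on its own merits. The overall architecture (reduce to a symmetric configuration, classify base geodesics, then an explicit trig identity) is reasonable, and your formula $\sinh d_0=\cosh s\,\sinh|t|$ is correct: it follows from $\sinh d_0=\sinh|pm'|\sin(\angle pm'p')=\sinh|pm'|\cos(\angle pm'm)$ together with $\cos(\angle pm'm)=\tanh|t|/\tanh|pm'|$ and $\cosh|pm'|=\cosh s\cosh|t|$. But there is a genuine gap at the classification step. You write that $\sigma_\mu(C)$ is also a $\lambda$-geodesic through $p,q$ and that ``by uniqueness of the base geodesic, this forces $\sigma_\mu(\beta)=\beta$.'' Uniqueness of the base geodesic is uniqueness \emph{given the curve}: it tells you that the base geodesic of $\sigma_\mu(C)$ is $\sigma_\mu(\beta)$, nothing more. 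To conclude $\sigma_\mu(\beta)=\beta$ you would need $\sigma_\mu(C)=C$, which you have not established; a priori $\sigma_\mu$ could interchange two distinct solutions whose base geodesics are not perpendicular to $\mu$. Since the entire count of ``exactly two'' rests on having shown that \emph{every} admissible $\beta$ is a perpendicular to $\mu$, this step is load-bearing. The same objection applies verbatim to the $\lambda=1$ case (``the center is fixed by $\sigma_\mu$''). The gap is fixable inside your framework: if $p',q'$ are the feet of the perpendiculars from $p,q$ to $\beta$, then $p'\neq q'$ (else $p=q$) and $pp'q'q$ is a Saccheri quadrilateral with legs of length $d_0$; its base and summit share a common perpendicular bisector, which must be $\mu$, whence $\beta\perp\mu$. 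That is the symmetry argument done correctly.

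A secondary weakness, which you partly acknowledge, is the non-crossing step. A bigon bounded by two arcs of geodesic curvature at most $1$ does \emph{not} by itself contradict Gauss--Bonnet: Gauss--Bonnet for such a bigon only yields $\pm\lambda_1L_1\pm\lambda_2L_2=|D|+\alpha_1+\alpha_2>0$, which is satisfiable --- indeed the two $\lambda$-geodesics of the lemma bound exactly such a bigon. What you actually need is that two distinct complete constant-curvature curves meet in at most two points, and the clean way to get this (and, frankly, the whole lemma) is your ``alternative route'': in the Poincar\'e disk with $p,q$ symmetric about the center, every constant-curvature curve is a circle or line, the ones through $p$ and $q$ form the pencil of circles centered on the Euclidean perpendicular bisector of $pq$, and the intersection angle with $\partial\mathbb{D}$ is a strictly monotone function of the pencil parameter on each side of the diameter through $p,q$, taking each value $\lambda\in(0,1]$ exactly once per side. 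I would promote that from a fallback to the main proof, or else repair the two steps above as indicated.
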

Now we define $\lambda$-convexity of a set in the hyperbolic plane, $h$-convexity is the particular case of this.
\begin{definition}
For given $\lambda \in [0,1]$, a set $\Omega$ in the hyperbolic plane is said to be $\lambda$-convex if for every $p,q \in \Omega$, the $\lambda$-geodesics joining them lie inside $\Omega$. $1$-convex sets are also called $h$-convex sets.
\end{definition}

\begin{lemma}
A compact domain $\Omega$ with $C^{2}$-boundary is $\lambda$-convex if and only if the geodesic curvature $k_{g}$ of the boundary satisfies $k_{g} \geq \lambda$ ($k_{g} \leq -\lambda$, in case of opposite orientation).
\end{lemma}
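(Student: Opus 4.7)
The plan is to establish the equivalence by a local $C^2$ comparison between $\partial\Omega$ and the $\lambda$-geodesic tangent to it at a boundary point -- the hyperbolic analogue of the classical Euclidean fact that convexity of a $C^2$ domain is equivalent to nonnegativity of the boundary curvature. Throughout I fix the inward unit normal to $\partial\Omega$ so that the stated sign convention $k_g \geq \lambda$ applies; the \emph{opposite orientation} case follows by swapping signs.

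For the direction ``$k_g \geq \lambda \Rightarrow \lambda$-convex'', pick $p, q \in \Omega$ and let $\gamma$ be a $\lambda$-geodesic joining them; I claim $\gamma \subset \Omega$. Assume for contradiction that $\gamma$ exits $\Omega$; by compactness, $\gamma \cap \partial\Omega$ contains a tangency point $r$ at which $\gamma$ passes from the outside of $\Omega$ to its inside. In geodesic normal coordinates at $r$ aligned with the common tangent direction, both $\partial\Omega$ and $\gamma$ are $C^2$ graphs sharing the same zeroth- and first-order Taylor data, with second-order coefficients governed by $k_g(r)$ and $\lambda$ respectively. The fact that $\gamma$ exits $\Omega$ at $r$ forces its inward second-order deviation to strictly exceed that of $\partial\Omega$, which gives $\lambda > k_g(r)$ and contradicts the hypothesis. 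Hence $\gamma \subset \overline{\Omega}$, and a strict version of the same comparison excludes interior tangencies, so in fact $\gamma \subset \Omega$.

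For the converse I argue by contrapositive: suppose $k_g(p_0) < \lambda$ at some $p_0 \in \partial\Omega$, and let $\gamma$ be the $\lambda$-geodesic tangent to $\partial\Omega$ at $p_0$ whose geodesic curvature vector points into $\Omega$. The same Taylor comparison then shows that $\gamma$ lies strictly inside $\Omega$ in a punctured neighbourhood of $p_0$. Choose $p, q \in \gamma$ on opposite sides of and close to $p_0$; both lie in $\Omega$. The two $\lambda$-geodesics through $p, q$ are $\gamma$ itself and its mirror image $\gamma'$ across the geodesic segment $\overline{pq}$, and $\gamma'$ bulges towards $\partial\Omega$. Comparing the $O(L^2)$ deviations of $\gamma'$ and $\partial\Omega$ from $\overline{pq}$, with leading coefficients proportional to $\lambda$ and $k_g(p_0)$ respectively, shows that for $L = d_\H(p,q)$ small enough $\gamma'$ pierces $\partial\Omega$ and exits $\Omega$, contradicting $\lambda$-convexity.

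The main obstacle is the sign and orientation bookkeeping: there are two $\lambda$-geodesics through every pair of points, two possible normal orientations on $\partial\Omega$, and two tangency modes (from inside vs.\ outside), and all of these must be tracked consistently so that the second-order comparison yields the strict inequality in the correct direction. Once a geodesic normal chart is fixed at the tangency and the Taylor expansions of $\partial\Omega$ and the $\lambda$-geodesic are written out, the remainder is a routine $C^2$-graph comparison in one variable.
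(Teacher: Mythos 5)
The paper states this lemma without proof, quoting it from the literature on $\lambda$-convexity, so there is no in-paper argument to compare with; judged on its own, your proposal has a concrete gap in each direction. In the implication $k_g\ge\lambda\Rightarrow\lambda$-convexity, the step ``by compactness, $\gamma\cap\partial\Omega$ contains a tangency point $r$'' is unjustified and generally false: a $\lambda$-geodesic that leaves and re-enters $\Omega$ typically crosses $\partial\Omega$ transversally at every intersection point (a straight chord of a disc already shows this for $\lambda=0$), so there is no point of $\partial\Omega$ at which your second-order graph comparison can be run. This local-to-global passage is the real content of the implication. A standard repair is a sliding argument: if $\gamma\not\subset\Omega$, take $x_0\in\gamma$ maximizing $d_\H(\cdot,\Omega)$ with value $d>0$; then $\gamma$ is tangent at $x_0$ to the outer parallel curve $\partial\Omega_d$ from its inner side, and one must additionally know that $\partial\Omega_d$ still has geodesic curvature $\ge\lambda$. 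That follows from the Riccati equation $k'(t)=1-k(t)^2$ for the curvature of outer parallels in $\H^2$ and uses $\lambda\le 1$ essentially; none of this appears in your sketch. (Alternatively, one shows each tangent $\lambda$-curve is a global support curve by an open-closed argument on $\partial\Omega$.)

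In the converse, your choice of test points fails to violate $\lambda$-convexity. In normal coordinates at $p_0$ write $\partial\Omega=\{y=\tfrac{k_g}{2}x^2+o(x^2)\}$ with $\Omega$ locally above; your $\gamma$ is $y=\tfrac{\lambda}{2}x^2+o(x^2)$ and $p,q$ sit on it at height $\approx\tfrac{\lambda}{2}a^2$. The arc of $\gamma$ between them dips down only to $p_0\in\partial\Omega\subset\Omega$, so it stays in the compact domain $\Omega$, while the mirror arc $\gamma'$ across the chord peaks at height $\approx\lambda a^2$, i.e.\ it bulges \emph{into} $\Omega$, not ``towards $\partial\Omega$'' as you assert. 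Thus neither $\lambda$-arc joining $p$ and $q$ leaves $\Omega$, and no contradiction is reached. The correct choice is to take $p,q$ on $\partial\Omega$ itself (or slightly inside), say at $(\pm a,\tfrac{k_g}{2}a^2+o(a^2))$, and to use the \emph{downward}-curving $\lambda$-arc joining them: its midpoint lies at height $\approx\tfrac{k_g-\lambda}{2}a^2<0$, strictly below $\partial\Omega$ at $x=0$, hence outside $\Omega$ for small $a$. With that substitution the converse goes through; the forward direction still needs the global ingredient described above.
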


\begin{remark}\label{H_convex}
If a domain is $\lambda_{0}$-convex then it is $\lambda$-convex for all $\lambda \leq \lambda_{0}$. In particular, every $h$-convex set is convex but converse is not true. For example, consider convex polygon.

In higher dimensions, $h$-convexity can be defined in the similar way. We define $h$-convexity of domains in $n$-dimensional hyperbolic space $\Hn$ in terms of horospheres.
\end{remark}

 A horoball is the limit of a sequence of increasing balls sharing a tangent hyperplane and its point of tangency. A horosphere is the boundary of a horoball.

\begin{definition}
A domain $\Omega \subset \Hn$ is said to be horoconvex (or $h$-convex) if, for every point $p \in \partial \Omega$, there exists a horosphere passing through the point $p$ such that the domain $\Omega$ lies entirely in the horoball bounded by the horosphere.
\end{definition}


\subsection{Steiner formula \& Alexandrov–Fenchel inequality} Let $K\subset \Hn$ be a smooth, bounded, convex domain and $\de>0.$ Then by Steiner formula (cf. \cite[Chapter 18, Section 4]{Santalo}), the volume of $K_\de$ is given by
\begin{equation}\label{Steiner_formula}
    \vol(K_\de):=|K_\de|=\vol(K)+\sum_{j=0}^{n-1}\binom{n}{j}V_j(K)\int_{0}^{\de}\cosh^j(t)\sinh^{n-j-1}(t)\dt.
\end{equation}
Therefore, the perimeter $P(K_\de):=|\pa K_\de|$ of $K_\de$ has the following expansion:
\begin{equation}\label{Steiner_1}
    P(K_\de)=\frac{d}{d\de}(\vol(K_\de))=\sum_{j=0}^{n-1}\binom{n}{j}V_j(K)\left(\frac{d}{d\de}\int_{0}^{\de}\cosh^j(t)\sinh^{n-j-1}(t)\dt\right).
\end{equation}
Now using \eqref{Curv_Quer}, we restate \eqref{Steiner_1} in terms of the quermassintegrals of $K$:
\begin{align}\label{Steiner}
    P(K_\de)=\sum_{j=0}^{n-2}n\binom{n}{j}\Big\{ W_{n-j}(K)&+\frac{n-j-1}{j+2}W_{n-j-2}(K)\Big\}\left(\frac{d}{d\de}\int_{0}^{\de}\cosh^j(t)\sinh^{n-j-1}(t)\dt\right)\nonumber\\&+n^2W_1(K)\left(\frac{d}{d\de}\int_{0}^{\de}\cosh^{n-1}(t)\dt\right).
\end{align}
{\bf Hyperbolic Isoperimetric inequality:} Let $\ga$ be a closed curve in $\H^2$ and $K$ be the domain enclosed by $\ga$. Then the hyperbolic isoperimetric inequality (cf. \cite{Schmidt}) states that 
\begin{equation}\label{Isoperimetric}
    P(K)^2\geq 4\pi |K|+ |K|^2,
\end{equation}
where $|K|$ is the area of $K$ and $P(K)$ is the length of $\ga$. Furthermore, equality occurs if and only if $\ga$ is a circle.

Next, we state an isoperimetric inequality between the quermassintegrals of an h-convex domain in hyperbolic space obtained by Wang and Xia \cite[Theorem 1.1]{Wang2014}.
\begin{proposition}[Alexandrov–Fenchel inequality in $\Hn$]\label{Alexandrov}
Let $K\subset \Hn$ be a smooth, h-convex, bounded domain. Also let $f_m:[0,\infty)\longmapsto \R_+$ be defined by $f_m(r)=W_m(B_r),$ where $B_r$ is the geodesic ball of radius $r$. Then for $0\leq i<j\leq n-1$,
$$W_j(K)\geq f_j\circ f_i^{-1}(W_i(K)).$$
The equality occurs if and only if $K$ is a geodesic ball.
\end{proposition}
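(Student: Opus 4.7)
The plan is to prove the inequality via a geometric flow argument, which is the strategy followed by Wang--Xia. By iterating on consecutive pairs of indices and using that each map $f_m$ is strictly increasing in the radius, it suffices to establish, for every $0 \leq i \leq n-2$, the one-step inequality
\[
W_{i+1}(K) \geq f_{i+1}\circ f_i^{-1}\bigl(W_i(K)\bigr),
\]
with equality only for geodesic balls; the general case then follows by composition.

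For the one-step inequality, I would evolve $\pa K$ by an inverse curvature flow of the form $\partial_t X = F \nu$, where $\nu$ is the outward unit normal and $F$ is an appropriate curvature quotient built from the normalised elementary symmetric polynomials $H_k$ appearing in \eqref{Curvature_int}. The speed $F$ is chosen so that $W_i(K_t)$ is preserved along the flow while $W_{i+1}(K_t)$ is monotone in $t$. These monotonicity properties are verified by combining the first-variation identities for curvature integrals under normal flows with the identity \eqref{Curv_Quer} linking the $V_{n-j-1}$'s to the $W_k$'s. Assuming long-time existence and smooth convergence of a suitable rescaling of the flow to a geodesic sphere $B_r$ with $W_i(B_r) = W_i(K)$ (that is, $r = f_i^{-1}(W_i(K))$), the monotonicity of $W_{i+1}$ yields
\[
W_{i+1}(K) \geq \lim_{t\to\infty} W_{i+1}(K_t) = W_{i+1}(B_r) = f_{i+1}\circ f_i^{-1}\bigl(W_i(K)\bigr).
\]
The rigidity statement follows by applying the strong maximum principle to the evolution equation for the curvature quotient: equality forces all principal curvatures to agree, so $\pa K$ is totally umbilical and hence $K$ is a geodesic ball.

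The principal obstacle is showing that $h$-convexity is preserved under the flow, i.e.\ that $\ka_i(\cdot, t) \geq 1$ for every principal curvature and every $t \geq 0$. This requires a delicate maximum-principle analysis of the evolution equation for the second fundamental form. In $\Hn$ the constant sectional curvature $-1$ contributes a favourable lower-order term that makes the horospherical barrier $\ka = 1$ natural for the flow, so the task reduces to controlling the reaction terms near this barrier. Coupled with preservation of $h$-convexity, one then obtains long-time existence via uniform curvature estimates and Krylov--Safonov type regularity, and smooth convergence (after rescaling) of the flow to a geodesic sphere. These analytic inputs form the technical heart of the argument; once they are in place, the inequality and its rigidity follow rapidly from the flow's monotonicity and the computation of $W_{i+1}$ on a ball.
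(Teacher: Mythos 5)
The paper does not prove this proposition at all: it is quoted directly as Theorem 1.1 of Wang and Xia \cite{Wang2014}, so there is no in-paper argument to compare yours against. Your outline does correctly identify the strategy of that cited source --- a quermassintegral-preserving curvature flow that fixes $W_i$, moves $W_{i+1}$ monotonically, preserves $h$-convexity, and converges to a geodesic sphere --- and the reduction to the one-step inequality via strict monotonicity of the maps $f_m$ is sound.

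As a proof, however, your proposal has a genuine gap, and you essentially name it yourself: every hard step is assumed. Preservation of $h$-convexity under the flow, long-time existence, and smooth convergence to a round sphere are not routine consequences of the maximum principle; they constitute the entire technical content of Wang--Xia's paper, and the horospherical barrier $\kappa = 1$ is precisely where the argument can break down. Note that for merely convex domains no such Alexandrov--Fenchel inequality is currently available in $\Hn$ (as the paper's closing remarks emphasize), so any argument that does not use $h$-convexity in an essential, quantitative way in the curvature evolution cannot be correct --- ``a favourable lower-order term from the curvature $-1$'' is not yet an argument. You also never specify the speed $F$ nor verify the claimed monotonicity of $W_{i+1}$ along a $W_i$-preserving flow; this requires the hyperbolic Minkowski formulas combined with the Newton--MacLaurin inequalities for the $H_k$, without which the sign of the variation of $W_{i+1}$ is undetermined. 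In the context of this paper the correct move is simply to cite \cite[Theorem 1.1]{Wang2014}, as the authors do; reproducing the flow proof would require importing all of that machinery.
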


\section{Main results}\label{main_results}
In this section, we first give a proof of Theorem \ref{Nagy_theorem}.
\begin{proof}[Proof of Theorem \ref{Nagy_theorem}]
$(i)$ Since $n=2,$ we have $P(K)=P(K^*)$, i.e., $W_1(K)=W_1(K^*)$. Therefore, by isoperimetric inequality \eqref{Isoperimetric}, we have $|K|\leq |K^*|$, i.e., $W_0(K)\leq W_0(K^*).$ Hence from Steiner formula \eqref{Steiner}, we get
\begin{align}
    P(K_\de)&\leq 2\Big\{W_{2}(K^*)+\frac{1}{2}W_0(K^*)\Big\}\left(\frac{d}{d\de}\int_{0}^{\de}\sinh(t)\dt\right)+4W_1(K^*)\left(\frac{d}{d\de}\int_{0}^{\de}\cosh(t)\dt\right)\nonumber\\
    &=P(K^*_\de).\nonumber
\end{align}
The equality case follows immediately from the isoperimetric inequality \eqref{Isoperimetric}.

$(ii)$ Given that 
\begin{equation}\label{Assumption}
    W_{n-1}(K)=W_{n-1}(K^*).
\end{equation}
Let $0\leq j < n-1$. Since $K$ is $h$-convex, applying Proposition \ref{Alexandrov} for $j$ and $n-1$ and using \eqref{Assumption}, we get
\begin{align*}
    f_{n-1}\circ f_j^{-1}(W_j(K)) \leq W_{n-1}(K)&=W_{n-1}(K^*)=f_{n-1}\circ f_j^{-1}(W_j(K^*)).
    \end{align*}
    Thus
    \begin{equation}\label{Compar}
    f_{n-1}\circ f_j^{-1}(W_j(K)) \leq f_{n-1}\circ f_j^{-1}(W_j(K^*)).
\end{equation}
Now from \eqref{Quermass}, we observe that if $r_1<r_2$, then $W_{i}(B_{r_1})<W_{i}(B_{r_2}),$ for all $0\leq i\leq n-1$. Thus the function $r\longmapsto f_i(r)$ is a strictly increasing function for all $0\leq i\leq n-1$. Therefore inequality \eqref{Compar} immediately gives that
\begin{equation}\label{Quer_comp}
    W_j(K)\leq W_j(K^*),\;\text{for all}\; 0\leq j<n-1.
\end{equation}
Also the equality occurs in \eqref{Quer_comp} only when $K$ is a geodesic ball (by Proposition \ref{Alexandrov}). Thus using \eqref{Quer_comp} in \eqref{Steiner}, we have 
\begin{align}
P(K_\de)\leq \sum_{j=0}^{n-2}n\binom{n}{j}\Big\{ W_{n-j}(K^*)&+\frac{n-j-1}{j+2}W_{n-j-2}(K^*)\Big\}\left(\frac{d}{d\de}\int_{0}^{\de}\cosh^j(t)\sinh^{n-j-1}(t)\dt\right)\nonumber\\
&+n^2W_1(K^*)\left(\frac{d}{d\de}\int_{0}^{\de}\cosh^{n-1}(t)\dt\right)\nonumber\\
&= P(K_\de^*).\nonumber
\end{align}
Hence $P(K_\de)\leq P(K_\de^*).$ Now the equality case follows from \eqref{Quer_comp}. This completes the proof.
\end{proof}
\begin{remark}\label{Perimeter constraint} Note that for $n=2$, $W_{n-1}(K)=W_{n-1}(K^*)$ implies that $P(K)=P(K^*)$ (see Section \ref{Quermassin}). However, if $n\geq 3,$ then $P(K)<P(K^*)$. 
\end{remark}

Next, we prove an auxiliary result which is needed to prove our main result. Let us start with a notational set up. Let $\Om$ and $A_\Om$ be as stated in \eqref{Domain}, i.e., $\Om=\Om_N\setminus \overline{\Om_D}$ and $A_\Om=B_R\setminus \overline{B_r}$. Also, we have $W_{n-1}(\Om_D)=W_{n-1}(B_r),$ i.e., $\Om_D^*=B_r.$ Define
$$\B(\de)=\pa\Om_{D_\de}\cap \Om,\; L(\de)=|\B(\de)|,\;\text{for}\;\de>0,$$
$$\delta_0=\sup\{\de>0: \B(\de) \neq\emptyset \}.$$
Note that $L(\de)\leq P(\Om_{D_\de}),\;\forall\;\de>0.$ We set $\L(\delta):= P(B_{r_\de})$ for simplicity. For $p\in (1,\infty)$, we construct $M$ and $\M$ as follows:
	\begin{equation}\label{Parame}
		M(\delta)=\displaystyle\int_{0}^{\delta}\frac{1}{L(r)^{p'-1}}\dr ,\quad 
		\M(\delta)=\int_{0}^{\delta}\frac{1}{\L(r)^{p'-1}}\dr,
	\end{equation}
	where $p'=\frac{p}{p-1}$ is the holder conjugate of $p$.

\begin{remark} \label{Property_M}
\begin{enumerate}[(i)]
    \item Note that, both $\de\longmapsto M(\de)$ and $\de\longmapsto \M(\de)$ are strictly increasing functions on $[0,\de_0]$ and $[0, R-r]$, respectively. Moreover, $M(\de_0)$ can be infinite also.
    
    \item Observe that, by Theorem \ref{Nagy_theorem}, $L(\de)\leq \L(\de)$ for all $\de >0.$ Therefore, by the definitions of $M$ and $\M$, we immediately have $\M(\de)\leq M(\de)$ for all $\de>0.$

\end{enumerate}
\end{remark}
 
 The euclidean version of the following lemma has been proved in \cite[Lemma 2.7]{AnoopAshok} when $\Om_D\subset \R^n$ is a ball, and in \cite[Lemma 5.2]{Joy} when $\Om_D\subset \R^n$ is convex. We generalize these results for $\Om_D\subset \H^n$ using the hyperbolic analogue of Nagy's inequality.
 
\begin{lemma}\label{Auxiliary_lemma}
Suppose $\Om,A_\Om, M$ and $\M$ are as mentioned above. Assume that $\Om_D$ is convex for $n=2$ and $\Om_D$ is h-convex for $n\geq 3$. Then the followings hold:
\begin{enumerate}[(i)]
    \item $R-r\leq \de_0,$ with equality occurs only when $\Om$ is a concentric annular region.
    
    \item Define
	\begin{align*}\label{paramap}
	   & G(\be)=L(M^{-1}(\be)),\;\text{for}\;\be \in [0,M(\de_0)]\; \\
	   \text{and} \qquad & \G(\be)=\L(\M^{-1}(\be)),\;\text{for}\;\be \in [0,\M(R-r)].
	\end{align*}
	Then $G(\be)\leq \G(\be)$, for all $\be \in [0,\M(R-r)]$ and equality holds if and only if $\Om$ is a concentric annulus. Furthermore, if $\Omega$ is not a concentric annulus, then $G(\be)<\G(\be)$ on $[\be',\M(R-r)]$, for some $\be' \in [0,\M(R-r)]$.
\end{enumerate}
\end{lemma}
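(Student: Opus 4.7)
The plan rests on three ingredients: (a) the coarea formula applied to the $1$-Lipschitz distance function $d(x):=d_{\H}(x,\Om_D)$ on $\Om$, which satisfies $|\Gr d|=1$ almost everywhere and whose level set $\{d=s\}\cap \Om$ coincides with $\B(s)$ for a.e.\ $s$; (b) the Nagy-type bound $L(\de)\leq P(\Om_{D_\de})\leq \L(\de)$, which combines the inclusion $\B(\de)\subset \pa \Om_{D_\de}$ with Theorem \ref{Nagy_theorem} (applicable because $W_{n-1}(\Om_D)=W_{n-1}(B_r)$); and (c) the strict monotonicity of $s\longmapsto \L(s)=\om_{n-1}\sinh^{n-1}(r+s)$. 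From (a) one obtains the volume identities
\begin{equation*}
|\Om|=\int_0^{\de_0}L(s)\,\ds,\qquad |A_\Om|=\int_0^{R-r}\L(s)\,\ds.
\end{equation*}

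For part (i), I would argue by contradiction. If $\de_0<R-r$, then $|\Om|=|A_\Om|$ together with $L\leq \L$ yields
\begin{equation*}
\int_0^{\de_0}L(s)\,\ds=\int_0^{R-r}\L(s)\,\ds>\int_0^{\de_0}\L(s)\,\ds\geq \int_0^{\de_0}L(s)\,\ds,
\end{equation*}
which is absurd. In the equality case $\de_0=R-r$, the same chain collapses to $L(s)=\L(s)$ for a.e.\ $s\in[0,R-r]$, forcing both $L(s)=P(\Om_{D_s})$ (so $\pa \Om_{D_s}\subset \overline{\Om_N}$ for a.e.\ $s$) and equality in Nagy's inequality. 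The rigidity in Theorem \ref{Nagy_theorem} then makes $\Om_D$ a geodesic ball whose radius is $r$ (by strict monotonicity of $W_{n-1}$ on balls). Consequently $\pa \Om_{D_s}$ is the concentric sphere of radius $r+s$, so $\overline{B_R}\subset \overline{\Om_N}$; combined with $|\Om_N|=|B_R|$, this yields $\Om_N=B_R$, hence $\Om=A_\Om$.

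For part (ii), $L\leq \L$ together with $p'-1>0$ immediately gives $M(\de)\geq \M(\de)$ on $[0,R-r]$. Fix $\be\in[0,\M(R-r)]$ and set $\de:=\M^{-1}(\be)\in[0,R-r]$ and $\de':=M^{-1}(\be)$. Then $M(\de')=\be=\M(\de)\leq M(\de)$ together with strict monotonicity of $M$ gives $\de'\leq \de$, whence by (c)
\begin{equation*}
G(\be)=L(\de')\leq \L(\de')\leq \L(\de)=\G(\be).
\end{equation*}
If equality holds for every $\be$, tracing back forces $L=\L$ on $[0,R-r]$ and the argument of (i) produces $\Om=A_\Om$. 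For the ``furthermore'' claim, assume $\Om\neq A_\Om$. If $\Om_D$ is not a geodesic ball, the rigidity of Theorem \ref{Nagy_theorem} strengthens Nagy's bound to $L(s)<\L(s)$ for every $s\in(0,\de_0]$. Otherwise $\Om_D$ is a geodesic ball while $\Om_N$ is not the concentric ball $B_R$; the identity $|\Om_N|=|B_R|$ then rules out $\Om_N\supset B_R$, so there is an open neighbourhood in $\Om_N^c$ of some point of $B_R$, forcing $L(s)<\L(s)$ on an open interval $(a,b)\subset(0,R-r]$. In either case one locates $b\in(0,R-r]$ with $M(\de)>\M(\de)$ strictly for every $\de\geq b$; setting $\be':=\M(b)$ then gives $\de'<\de$ strictly, and hence $\L(\de')<\L(\de)$ strictly, for every $\be\in[\be',\M(R-r)]$, which yields $G(\be)<\G(\be)$ on that tail.

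\textbf{Main obstacle.} The principal technical point I foresee is the second branch of the ``furthermore'' assertion: verifying that when $\Om_D$ is a geodesic ball but $\Om\neq A_\Om$, the set $\{s\in(0,R-r]:L(s)<\L(s)\}$ actually contains an open interval, so that $M-\M$ strictly exceeds zero on a right-neighbourhood of the upper endpoint of integration. This reduces to a continuity argument on how the concentric spherical family $\pa B_{r+s}$ exits $\Om_N$ as $s$ varies, and it is the only place where the clean Nagy-based estimate does not directly suffice.
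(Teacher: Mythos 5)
Your proof is correct and follows essentially the same route as the paper: the contradiction via the integrated Nagy bound $L\le\L$ for (i), and the comparison $M^{-1}(\be)\le \M^{-1}(\be)$ combined with the strict monotonicity of $\L$ for (ii). The only divergence is in the ``furthermore'' step, where the paper sidesteps the ``main obstacle'' you flag by simply taking the contrapositive of the equality case of (i) to produce a single point $\de'$ with $L(\de')<\L(\de')$ (continuity then makes $M-\M$ strictly positive beyond $\de'$), rather than analyzing geometrically how the spheres $\pa B_{r+s}$ exit $\Om_N$.
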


\begin{proof}
$(i)$ If possible, let $R-r>\de_0$. Now by Theorem \ref{Nagy_theorem}, we have $L(\de)\leq \L(\de)$. Therefore,
\begin{align*}
|A_\Om|=\int_{0}^{R-r}\L(\delta)\d\de&\geq	\int_{0}^{\de_0}L(\delta)\d\de+\int_{\de_0}^{R-r}\L(\delta)\d\de=|\Omega|+\int_{\de_0}^{R-r}\L(\delta)\d\de>|\Omega|,
	\end{align*}
which is a contradiction as $|\Om|=|A_\Om|$ (by assumption). Hence $ R-r\leq \de_0.$ Now if $\de_0= R-r$, then $$\displaystyle\int_{0}^{R-r}\L(\delta)\d\de=\int_{0}^{\de_0}L(\delta)\d\de \implies \displaystyle\int_{0}^{R-r}(\L(\delta)-L(\delta))\d\de=0.$$ 
Observe that both $L$ and $\L$ are continuous function and hence $L(\delta)=\L(\delta),$ for all $\delta\in[0,R-r]$. Further, by applying Theorem \ref{Nagy_theorem} for $\Om_D$, we get 
$$L(\delta)\leq P(\Om_{D_{\de}})\leq P(B_{r_{\de}}) =\L(\delta),\;\text{ for all}\;\delta\in[0,R-r].$$ 

Thus we have $P(\Om_{D_{\de}})= P(B_{r_{\de}})$. Therefore, by Theorem \ref{Nagy_theorem}, it follows that $\Om_D$ must be a geodesic ball. Also since $\de_0= R-r$, $\Ga_N$ has to be a geodesic sphere. Hence $\Om$ must be a concentric annulus.

$(ii)$ Let $M_*=M(\de_0)$ and $\M_*=\M(R-r)$. Then using $(i)$ and Remark \ref{Property_M}, we have $\M_*\leq M_*$. Therefore, $$M^{-1}(\be)\leq \M^{-1}(\be),\;\text{ for all }\;\be\in [0,\M_*].$$
Since $\de\longmapsto \L(\de)$ is an strictly increasing function on $[0, R-r]$, we get
$$G(\be)=L(M^{-1}(\be))\leq \L(M^{-1}(\be))\leq \L(\M^{-1}(\be))=\G(\be).$$
	Moreover, if $G(\be)=\G(\be),$ then $L(\de)=\L(\de),\;\text{ for all }\;\de\in [0,R-r]$. Thus the equality case follows immediately from $(i)$. Now if $\Om$ is not a concentric annulus, then by $(i)$, there exists $\delta'\in [0,R-r] $ such that  $L(\delta')<\L(\delta')$. Thus $\M(\delta')<M(\delta')$ and hence $\M(\delta)<M(\delta)$ for all $\delta'\leq\delta\leq R-r$. Now substituting $\be'=M(\delta')$ gives the desired conclusion.
\end{proof}

Now we state few properties of a first eigenfunction of \eqref{Problem} associated to $\t_1$.
\begin{proposition}\label{Properties}
Let $\Om$ be as mentioned in \eqref{Domain} and $\t_1(\Om)$ be the first eigenvalue of \eqref{Problem} on $\Om$. Suppose that $u$ is an  eigenfunction associated to $\t_1(\Om)$. Then 
\begin{enumerate}[(i)]
    \item $u$ has constant sign.
    \item $u\in C^1(\Om).$
    \item if $v$ is an eigenfunction associated to $\t_1(A_\Om)$, $v$ is radially constant and radially increasing.
\end{enumerate}
\end{proposition}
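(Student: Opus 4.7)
The plan is to handle the three assertions in order of increasing depth: (i) via the variational characterization, (ii) via standard $p$-Laplace regularity, and (iii) by combining a symmetrization argument using simplicity with a one-dimensional ODE analysis.

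For (i), I would exploit the fact that $|\Gr|u|| = |\Gr u|$ almost everywhere in $\Om$. Since $u$ realizes the infimum in \eqref{Variational}, so does $|u|$, making $|u|\geq 0$ a first eigenfunction as well. Harnack's inequality (or V\'azquez's strong maximum principle for the $p$-Laplacian) then forces $|u|>0$ throughout $\Om$, so $u$ cannot change sign. For (ii), a Moser iteration applied to \eqref{Problem} yields $u\in L^\infty_{\mathrm{loc}}(\Om)$, whence the right-hand side $\t_1|u|^{p-2}u$ is locally bounded; the interior $C^{1,\alpha}$ regularity theory for quasilinear equations of $p$-Laplace type (Tolksdorf, DiBenedetto, Lieberman) then gives $u\in C^{1,\alpha}_{\mathrm{loc}}(\Om)$, in particular $u\in C^1(\Om)$.

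The statement (iii) needs the most work. To show $v$ is radial, I would use the isometric action of $O(n)$ on $\Hn$ fixing the common center $x_0$ of $B_r$ and $B_R$. For every $T\in O(n)$, $v\circ T$ is again a first eigenfunction of \eqref{Problem} on $A_\Om$ with the same $L^p$-norm as $v$, and simplicity of $\t_1(A_\Om)$ gives $v\circ T=c_T\,v$; the $L^p$-normalization and the positivity from (i) force $c_T=1$. Therefore $v(x)=V(d_\H(x,x_0))$ for some function $V:[r,R]\ra\R$. Writing \eqref{Problem} in hyperbolic polar coordinates reduces it to the ODE
\begin{equation*}
-\bigl(\sinh^{n-1}(\rho)|V'(\rho)|^{p-2}V'(\rho)\bigr)' = \t_1(A_\Om)\sinh^{n-1}(\rho)V(\rho)^{p-1},\quad r<\rho<R,
\end{equation*}
with $V(r)=0$ and $V'(R)=0$. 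Integrating from $\rho$ to $R$ and using the Neumann condition together with $V>0$ yields
\begin{equation*}
\sinh^{n-1}(\rho)|V'(\rho)|^{p-2}V'(\rho) = \t_1(A_\Om)\int_{\rho}^{R}\sinh^{n-1}(s)V(s)^{p-1}\ds > 0,
\end{equation*}
which forces $V'(\rho)>0$ on $(r,R)$, i.e., $v$ is radially increasing.

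The main obstacle I anticipate lies in the symmetrization step of (iii): one must invoke simplicity carefully, ensuring the entire space of first eigenfunctions is one-dimensional so that $v\circ T$ is automatically a scalar multiple of $v$, and one must justify the pointwise interpretation of the Neumann condition $V'(R)=0$ needed to run the ODE integration. The latter ultimately draws on the $C^1$ regularity from (ii) (and on smoothness of $\pa B_R$ to push regularity up to the outer boundary), so parts (ii) and (iii) are genuinely linked.
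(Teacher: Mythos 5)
Your proposal is correct and follows essentially the same route as the paper, which does not carry out the proof itself but defers to the Euclidean references (Anoop--Ashok, Lindqvist, Barles): the $|u|$-minimizer argument plus the strong maximum principle for (i), interior $C^{1,\alpha}$ regularity for quasilinear equations for (ii), and the simplicity-plus-isometry symmetrization followed by integration of the radial ODE for (iii) are exactly the arguments those citations contain, transplanted to $\Hn$. Your closing remark about needing boundary regularity (or, alternatively, radial test functions in the weak formulation) to justify the pointwise Neumann condition $V'(R)=0$ is a fair and correctly identified technical point, not a gap.
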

\begin{proof} Proof follows by similar set of arguments as in the case of euclidean setting: see \cite[proposition A.2]{AnoopAshok} or \cite[Lemma 2.4]{Lindqvist} for $(i)$; for $(ii)$, see \cite[Theorem 1.3]{Barles}; proof of $(iii)$ can be found in \cite[Proposition A.5]{AnoopAshok}. We omit the detailed proof here.
\end{proof}
 Now we give a proof of Theorem \ref{RFK_theorem}. To prove our result, we adapt the ideas used in \cite{Hersch,AnoopAshok,Joy} to the hyperbolic space.
\begin{proof}[Proof of Theorem \ref{RFK_theorem}]
Let $v$ be an eigenfunction of \eqref{Problem} associated to $\t_1(A_\Om).$ Then by Proposition \ref{Properties}, $v$ is radial and it can be chosen positive in $A_\Om$, i.e., $v>0$ and $v(x)=v(d_\H(x,\pa B_r)),$ for all $x\in A_\Om$, where $d_\H$ is the hyperbolic distance function. Let $\M$ be as defined in \eqref{Parame}. Now we represent $v$ in terms of $\M$ in the following way:
$$v(x)=v(d_\H(x,\pa B_r))=(v\circ \M^{-1})(\M(d_\H(x,\pa B_r))),\;\forall\;x\in A_\Om.$$
Let $f=v\circ \M^{-1}.$ Then $v(x)=(f\circ\M)(d_\H(x,\pa B_r)), \;\forall\;x\in A_\Om.$ If $M_*=M(\de_0)$ and $\M_*=\M(R-r)$, then $\M_*\leq M_*.$ Recall that $\Ga_D=\pa \Om_D.$ Now define $u:\Om \longrightarrow \R$ as
\begin{equation*}
 		u(x)= \begin{cases}
 		 (f\circ M)(d_\H(x, \Ga_D)),& \mbox{if } \quad M(d_\H(x, \Ga_D)) \in [0,\M_*], \\
 		f(\M_*), & \mbox{if} \quad M(d_\H(x, \Ga_D)) \in (\M_*,M_*]. 
 		\end{cases}
 		\end{equation*}
 Note that $d_\H(\cdot, \Ga_D)$ is a Lipschitz function. Also $f$ is $C^1$ as $v$ is so (by Proposition \ref{Properties}-$(ii)$). Thus $u\in W^{1,p}(\Om)$. Further, $u(x)=0$ for all $x\in \Ga_D$. Hence $u\in W^{1,p}_{\Ga_D}(\Om).$ Now using the fact that $|\nabla d_\H (x, \Ga_D)|=1, \;\forall\;x\in \Om$ and by the Coarea formula \cite[Theorem 3.1]{Federer}, we get
 \begin{align*}
 			\int_{\Omega}|\nabla u(x)|^p\d V_g &=\int_{\Omega}|\nabla u(x)|^p|\nabla d_\H ( x, \Ga_D)|\d V_g\\ &=\int_{0}^{M^{-1}(\M_*)}\Bigg(\int_{\{x\in \Om:\;d_\H ( x, \Ga_D)=\de\} }|\nabla u(x)|^p\d S\Bigg) \d\de\\
 			&=\int_{0}^{M^{-1}(\M_*)}\Big(|f'(M(\de))|^p|M'(\de)|^p\Big)\Bigg(\int_{\B(\de)}\d S\Bigg) \d\de\\
 			&=\int_{0}^{M^{-1}(\M_*)}\Big(|f'(M(\de))|^p|M'(\de)|^p\Big) L(\de) \d\de\\
 			&=\int_{0}^{M^{-1}(\M_*)}\frac{|f'(M(\delta))|^p}{L(\delta)^{p'-1}}\d\de=\int_{0}^{\M_*}|f'(\be)|^p\d\be,
 		\end{align*}
 where we make a change of variable $M(\delta)=\be$ in the last step. Thus 
 \begin{equation}\label{Grad_u}
     \int_{\Omega}|\nabla u(x)|^p\d V_g=\int_{0}^{\M_*}|f'(\be)|^p\d\be.
 \end{equation}
 Also 
 \begin{align*}
    & \int_{\Omega}| u(x)|^p\d V_g\\ &=\int_{0}^{M^{-1}(M_*)}\Bigg(\int_{\{x\in \Om:\;d_\H ( x, \Ga_D)=\de\} }| u(x)|^p\d S\Bigg) \d\de\\
     &=\int_{0}^{M^{-1}(\M_*)}\Bigg(\int_{\{x\in \Om:\;d_\H ( x, \Ga_D)=\de\} }| u(x)|^p\d S\Bigg) \d\de
     +\int_{M^{-1}(\M_*)}^{\de_0}\Bigg(\int_{\{x\in \Om:\;d_\H ( x, \Ga_D)=\de\} }| u(x)|^p\d S\Bigg) \d\de\\
     &=\int_{0}^{\M_*} f(\be)^p L(M^{-1}(\be))^{p'} \d\be
     +f(\M_*)^p\int_{\M_*}^{M_*} L(M^{-1}(\be))^{p'} \d\be.\quad\qquad[ \text{putting}\;M(\de)=\be]
 \end{align*}
 Therefore, 
 \begin{equation}\label{Norm_u}
     \int_{\Omega}| u(x)|^p\d V_g=\int_{0}^{\M_*} f(\be)^p G(\be)^{p'} \d\be
     +f(\M_*)^p\int_{\M_*}^{M_*} G(\be)^{p'} \d\be,
 \end{equation}
 where $G$ is as defined in Lemma \ref{Auxiliary_lemma}-$(ii)$. By similar methods, we can show that 
 \begin{align}
     \int_{A_\Om}|\nabla v(x)|^p\d V_g &=\int_{0}^{\M_*}|f'(\be)|^p\d\be,\label{Grad_v}\\
     \int_{A_\Om}| v(x)|^p\d V_g &=\int_{0}^{\M_*} f(\be)^p \G(\be)^{p'}\d\be. \label{Norm_v}
 \end{align}
 Observe that, by Proposition \ref{Properties}-$(iii)$, $v$ attains its maxima on $\pa B_R$ and hence we have $f(\be)\leq f(\M_*)$ for all $\be \in [0,\M_*]$. Thus from \eqref{Norm_u}, \eqref{Norm_v} and using Lemma \ref{Auxiliary_lemma}-$(ii)$, we get
 \begin{align*}
     & \int_{A_\Om}| v(x)|^p\d V_g-\int_{\Omega}| u(x)|^p\d V_g\\
     &\leq f(\M_*)^p \int_{0}^{\M_*} \Big( \G(\be)^{p'}-G(\be)^{p'}\Big)\d\be-f(\M_*)^p\int_{\M_*}^{M_*} G(\be)^{p'} \d\be\\
     &= f(\M_*)^p \int_{0}^{\M_*}  \G(\be)^{p'}\d\be-f(\M_*)^p\int_{0}^{M_*} G(\be)^{p'} \d\be\\
     &= f(\M_*)^p \int_{0}^{\M_*} \Big(\L(\M^{-1}(\be))\Big)^{p'}\d\be-f(\M_*)^p \int_{0}^{M_*} \Big(L(M^{-1}(\be))\Big)^{p'}\d\be\\
     &=f(\M_*)^p \int_{0}^{ R-r} \L(\de)\d\de-f(\M_*)^p \int_{0}^{ \de_{0}} L(\de)\d\de=f(\M_*)^p\Big(|A_\Om|-|\Om|\Big).
 \end{align*}
 Since by assumption $|\Om|=|A_\Om|$, we have
 \begin{equation}
     \int_{A_\Om}| v(x)|^p\d V_g\leq \int_{\Omega}| u(x)|^p\d V_g,\label{Norm_compare}
 \end{equation}
 where equality occurs only when $\Om$ is a concentric annular region (by Lemma \ref{Auxiliary_lemma}-$(ii)$). Now the assertion follows substituting \eqref{Grad_u}, \eqref{Grad_v} and \eqref{Norm_compare} in the variational characterization \eqref{Variational} of $\t_1$. Further, equality case immediately comes from the equality case in \eqref{Norm_compare}. This completes the proof.
\end{proof}
\subsection*{Thermal insulation problem:} Let $\Om=\Om_N\setminus \overline{\Om_D}$ be a smooth, doubly connected domain in $\Hn$ as defined in \eqref{Domain}. For $p\in (1,\infty),$ let us consider the following boundary value problem on $\Om$:
\begin{equation}\tag{$\mathscr{T}$}
    \label{Problem2}
	\left. \begin{aligned}
	-\Delta_p u &=0 \qquad\qquad\;\text{in} \quad\Omega,\\
	u &=1 \qquad\quad \quad\;\text{on}\quad \Om_D,\\
|\nabla u|^{p-2}\frac{\partial u}{\partial \eta}+\be |u|^{p-2}u &=0 \qquad\quad \quad\;\text{on} \quad \partial \Om_N,
	\end{aligned}\right\}
\end{equation}
where $\be >0$ is a real parameter and $\eta $ is the outward unit normal to $\partial \Om_N$. Then the energy functional $\mathcal{E}(\Om_D,\Om)$ associated to 
\eqref{Problem2} is given by 
\begin{equation}\label{Energy}
    \mathcal{E}(\Om_D,\Om)=\inf_{v \in W^{1,p}(\Om_N), v \equiv 1 \;\text{in}\; \Om_D}\left\{\int_{\Om}|\nabla v|^p+\be \int_{\partial \Om_N}|v|^p\right\}.
\end{equation}
These types of problems arise in the study of thermal insulation, where a body $\Om_D$ of constant temperature remains surrounded by an insulating material $\Om_N\setminus \overline{\Om_D}$ and $\mathcal{E}(\Om_D,\Om)$ represents the energy of the system; we refer to the book \cite{Bucur2005} for an overview of such problems. Now it is natural to look for the critical configurations of $\Om_D$ and $\Om_N$ so that the energy $\mathcal{E}(\Om_D,\Om)$ is optimized. For planar Euclidean domains, in \cite[Theorem 3.1]{DellaPietra2022}, authors proved that $$\mathcal{E}(\Om_D,\Om_D+\de B_1)\leq \mathcal{E}(\Om_D^\#,\Om_D^\#+\de B_1),$$ where $\de>0$, $\Om_D^\#$ is an open ball with the same perimeter as $\Om_D$, and $B_1$ is the open Euclidean ball of radius one centered at the origin. Here $\Om_D+\de B_1:=\{x+\de y: x\in \Om_D, y\in B_1\}.$ The similar result holds in higher dimensions also if $\Om_D$ is convex and $\Om_D^\#$ is replaced by $\Om_D^*,$ where $\Om_D^*$ is the open Euclidean ball centered at the origin such that $W_{n-1}(\Om_D^*)=W_{n-1}(\Om_D);$ cf. \cite[Theorem 4.1]{DellaPietra2022}. We would like to stress that the hyperbolic analogue of these results can be proved using a similar method developed in this article. To be precise, we can prove the following result.
\begin{theorem}\label{Insulation_Theo}
Let $\Om_D\subset \Hn$ be a smooth, simply connected domain and $\Om_N=\Om_D+\de B_1,$ for some $\de >0,$ i.e., $\Om=(\Om_D+\de B_1)\setminus \overline{\Om_D}.$ Let $\mathcal{E}(\Om_D,\Om)$ be the energy associated to \eqref{Problem2} as defined in \eqref{Energy}. Then the following holds:
\begin{enumerate}[(i)]
    \item if $n=2$ and $\Om_D$ is convex, then $\mathcal{E}(\Om_D,\Om)\leq \mathcal{E}(\Om_D^\#,\Om_D^\#+\de B_1),$ where $\Om_D^\#$ is an open geodesic ball with same perimeter as $\Om_D.$
    
    \item if $n\geq 2$ and $\Om_D$ is h-convex, then $\mathcal{E}(\Om_D,\Om)\leq \mathcal{E}(\Om_D^*,\Om_D^*+\de B_1),$ where $\Om_D^*$ is an open geodesic ball with same $(n-1)$-th quermassintegral as $\Om_D.$
\end{enumerate}
Moreover, equality occurs in both the above cases when $\Om_D$ is an open geodesic ball in $\Hn$.
\end{theorem}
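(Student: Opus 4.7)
The plan is to adapt the outer-parallels test-function construction from the proof of Theorem \ref{RFK_theorem} to the insulation energy $\mathcal{E}$. A welcome simplification arises from the choice $\Om_N = \Om_D + \de B_1$: since $\pa \Om_N$ is exactly the level set $\{d_\H(\cdot,\pa \Om_D) = \de\}$, the inner-to-outer distance is the constant $\de$, and the reparametrization via $M$ and $\M$ from Lemma \ref{Auxiliary_lemma} is not needed.

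First, I would fix a minimizer $v^*$ of $\mathcal{E}(\Om_D^*, \Om_D^* + \de B_1)$ (and of $\mathcal{E}(\Om_D^\#, \Om_D^\# + \de B_1)$ in case $(i)$, which coincides with case $(ii)$ when $n=2$ since $W_1 = P/2$). Because the configuration is rotationally invariant about the center of the geodesic ball, a symmetry/uniqueness argument (moving planes or strict convexity plus uniqueness of the Euler--Lagrange equation of \eqref{Problem2}) gives that $v^*$ is radial. Write $v^*(x) = h(d_\H(x, \pa \Om_D^*))$ for some $C^1$ function $h : [0,\de] \to \R$ with $h(0) = 1$. Transport this profile to the ambient domain by setting
\[ u(x) = \begin{cases} 1, & x \in \Om_D, \\ h(d_\H(x,\pa \Om_D)), & x \in \Om. \end{cases} \]
Since $d_\H(\cdot, \pa \Om_D)$ is $1$-Lipschitz and $h$ is $C^1$, one has $u \in W^{1,p}(\Om_N)$ with $u \equiv 1$ on $\Om_D$, so $u$ is admissible in \eqref{Energy}.

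Next, apply the coarea formula exactly as in the proof of Theorem \ref{RFK_theorem}, using $|\nabla d_\H(\cdot,\pa \Om_D)| = 1$ almost everywhere, to obtain
\begin{align*}
\int_{\Om} |\nabla u|^p \d V_g &= \int_0^\de |h'(s)|^p L(s)\, \ds, \\
\int_{\pa \Om_N} |u|^p \dS &= |h(\de)|^p\, L(\de),
\end{align*}
where $L(s) := P(\Om_{D_s})$. The analogous identities with $\L(s) := P((\Om_D^*)_s)$ in place of $L(s)$ hold for $v^*$ on the ball-based configuration and evaluate precisely to $\mathcal{E}(\Om_D^*, \Om_D^*+\de B_1)$. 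By Theorem \ref{Nagy_theorem} applied with $K = \Om_D$ at each $s \in [0,\de]$, we have $L(s) \leq \L(s)$ under the convexity (resp.\ h-convexity) hypothesis. Since $\be > 0$, this yields
\[ \mathcal{E}(\Om_D, \Om) \leq \int_{\Om} |\nabla u|^p \d V_g + \be \int_{\pa \Om_N} |u|^p \dS \leq \int_0^\de |h'(s)|^p \L(s)\, \ds + \be |h(\de)|^p \L(\de) = \mathcal{E}(\Om_D^*, \Om_D^* + \de B_1), \]
and the rigidity statement follows from the equality case of Theorem \ref{Nagy_theorem}, which forces $\Om_D$ to be a geodesic ball.

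The main obstacle is confirming that the symmetric minimizer $v^*$ is radial, since only then can it be transported via the distance function $d_\H(\cdot,\pa \Om_D)$ to an admissible test function on the arbitrary domain. A secondary technical point is verifying the coarea slicing up to $s=\de$, which requires the level sets $\{d_\H(\cdot,\pa \Om_D)=s\}$ to be $(n-1)$-rectifiable with $s \mapsto L(s)$ nice enough; both are provided by the convexity (resp.\ h-convexity) of $\Om_D$, which ensures smoothness of the outer parallel bodies $\Om_{D_s}$ and continuity of $L$ on $[0,\de]$.
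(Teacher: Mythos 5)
Your proof is correct and follows exactly the strategy the paper indicates for this theorem (which it states without a written proof, deferring to ``a similar method developed in this article'' and to the Euclidean argument of Della Pietra et al.): transport the radial minimizer of the ball configuration along the outer parallel sets of $\pa \Om_D$ and apply Theorem \ref{Nagy_theorem} slice by slice via the coarea formula. Your observation that the $M$, $\M$ reparametrization of Lemma \ref{Auxiliary_lemma} is unnecessary here --- because the insulation energy is a sum of two terms, each monotone in $L(s)$, rather than a Rayleigh quotient --- is accurate and is precisely the simplification that makes the direct transport of the profile $h$ work.
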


\section{Final comments and open problems}\label{open_problem}
\begin{remark} We conclude this article by introducing some immediate open questions.
\begin{enumerate}[(i)]
    \item As pointed out in Remark \ref{Perimeter constraint}, for $n\geq 3,$ $K^*$ has a larger perimeter than $K$. It is not known whether Theorem \ref{Nagy_theorem}-$(ii)$ will hold or not if $K^*$ is replaced by $K^\#,$ where $K^\#$ is an open geodesic ball such that $P(K)=P(K^\#).$
    \item For $n=2,$ we have established the reverse Faber-Krahn inequality (Theorem \ref{RFK_theorem}) under the assumption that $\Om_D$ is a convex (geodesically) domain in $\Hn$. This assumption is necessary to apply the hyperbolic Steiner formula \eqref{Steiner_formula}. Therefore, our approach of proofs is not applicable if $\Om_D$ is not convex. Of course, it could be an interesting problem to study when $\Om_D$ is a non-convex domain. However, this seems to be a challenging problem at this moment.
    
    \item For $n\geq 3,$ we proved Theorem \ref{RFK_theorem} when $\Om_D$ is a $h$-convex domain in $\Hn$. Such assumption is essential in order to use the hyperbolic Alexandrov-Fenchel inequality (Theorem \ref{Alexandrov}) which is a crucial tool in proving Nagy's type inequality (Theorem \ref{Nagy_theorem}-$(ii)$). To the best of our knowledge, a similar version of Alexandrov-Fenchel inequality is not available in $\Hn$ if the domain is not $h$-convex.
\end{enumerate}

\end{remark}

\section{Acknowledgements}
The authors are grateful to Prof. T. V. Anoop for his valuable comments and many fruitful discussions about the results of this article.

\bibliographystyle{abbrvurl}
\bibliography{Reference}
\end{document}